\documentclass[12pt,a4paper]{article}

\usepackage[OT1]{fontenc}
\usepackage[utf8]{inputenc}

\usepackage{hyperref}

\usepackage[english]{babel}
\usepackage{amsfonts}
\usepackage{amsthm}
\usepackage{graphicx}
\usepackage{mathtools}  

\topmargin0cm
\textheight21cm
\footnotesep.3cm
\evensidemargin0cm
\oddsidemargin0cm
\textwidth16cm

\newtheorem{teo}{Theorem}[section]
\newtheorem{prop}[teo]{Proposition}
\newtheorem{lem}[teo]{Lemma}
\newtheorem{coro}[teo]{Corollary}
\newtheorem{defi}[teo]{Definitions}
\newtheorem{rem}[teo]{Remark}

\def\h{{\cal H}}

\def\bh{{\cal B}({\cal H})}
\def\kh{{\cal K}({\cal H})}

\def\ifi{{\cal I}_{\phi}}

\def\ran{{\rm{Ran\, }}}

\def\be{b_{\varepsilon}}

\def\pp1{\overline{p_1}}
\def\ppp1{\overline{\overline{p_1}}}

\hyphenation{Kittaneh}

\begin{document}

\title{\vspace*{0cm} Young's (in)equality for compact operators.\footnote{2010 MSC.  Primary 15A45, 47A30;  Secondary 15A42, 47A63.}}





\date{}
\author{G. Larotonda\footnote{Supported by Instituto Argentino de Matem\'atica (CONICET), Universidad Nacional de General Sarmiento and ANPCyT.}}

\maketitle

\setlength{\parindent}{0cm} 

\begin{abstract}
If $a,b$ are $n\times n$ matrices, Ando proved that Young's inequality is valid for their singular values: if $p>1$ and $1/p+1/q=1$, then
$$
\lambda_k(|ab^*|)\le \lambda_k\left( \frac1p |a|^p+\frac 1q |b|^q \right) \, \textit{ for all }k.
$$
Later, this result was extended for the singular values of a pair of compact operators acting on a Hilbert space by Erlijman, Farenick and Zeng. In this paper we prove that if $a,b$ are compact operators, then equality holds in Young's inequality if and only if $|a|^p=|b|^q$, obtaining a complete characterization of such $a,b$ in relation to other (operator norm) Young inequalities.\footnotesize{\noindent }\footnote{{Keywords and
phrases:} compact operator, Young inequality, unitarily invariant norm,  singular value equality}

\end{abstract}

\section{Introduction}\label{intro1}

It all boils down to the following elementary inequality named after W. H. Young: if $p>1$ and $1/p+1/q=1$, then for any $\alpha,\beta\in\mathbb R^+$, 
$$\alpha\beta\le \frac{1}{p}\alpha^p+\frac{1}{q}\beta^q
$$
with equality if and only if $\alpha^p=\beta^q$. 

\medskip

Operator analogues of this elegant fact are considered, following the fundamental paper by T. Ando \cite{ando} for $n\times n$ matrices, and an extension for compact operators by J. Erlijman, D. R. Farenick, R. Zeng \cite{efz}. If $a,b$ are compact operators on Hilbert space then for all $k\in\mathbb N_0$,
$$
\lambda_k(|ab^*|)\le \lambda_k\left(\frac1p |a|^p +\frac1q |b|^q\right)
$$
where each eigenvalue is counted with multiplicity. This allows to construct a partial isometry $u$ such that
$$
u |ab^*|u^*\le \frac1p |a|^p +\frac1q |b|^q
$$
for the partial order of operators. Therefore, this raised the natural question of whether
$$
\|u |ab^*|u^*\|=\| \frac1p |a|^p +\frac1q |b|^q\|
$$
implies $|a|^p=|b|^q$. It is easy to construct examples where this is false, if $\|\cdot\|$ is the operator norm. But  O. Hirzallah and F. Kittaneh showed with an elegant inequality \cite{hk} that it is true if $a,b$ are Hilbert-Schmidt operators, and the norm is the Hilbert-Schmidt norm.  Another nice paper, this time  by M. Argerami and D. Farenick \cite{af} proved that that it is also true for $|a|^p,|b|^q$ nuclear operators, that is when the norm is the trace norm $\|\cdot\|_1=Tr|\cdot|$. 

\medskip

In this paper, we prove that the necessary and sufficient condition is in fact the equality of all singular numbers, which enables us to characterize for exactly which norms the assertion above is true (Theorem \ref{elteo}).

\section{Young's inequality for compact operators}

Let $\mathcal H$ be a complex Hilbert space, and let us denote with $\bh$ the bounded linear operators acting in $\h$. For $y\in\bh$, with $|y|=\sqrt{y^*y}$ we denote the positive square root and then $y=\nu|y|$ is the polar decomposition of $y$. With $\nu:\overline{\ran|y|}\to\overline{\ran y}$ we denote its partial isometry, when necessary, the projection $\nu\nu^*$ onto the closure of the range of $y$ will be denoted by $p_y$.

\medskip

 In the following lemma we collect some results that will be used throughout this paper (and will help us fix the notation):

\begin{lem}\label{ellema}
Let $a,b,x\in \bh$, 
\begin{enumerate}
\item If $b=\nu |b|$ then $\nu^*\nu$ is the orthogonal projection onto the closure of the range of $|b|$, $|b^*|=\nu|b|\nu^*$ and $\nu\nu^*$ is the orthogonal projection onto the closure of the range of $|b^*|$.
\item $|ab^*|=\nu | |a| |b| |\nu^*$ and $\nu^*|ab^*|\nu=||a||b||$.
\item If $p$ is a projection, then $x=pxp$ implies $x=px$ and in particular $p_xp=p_x$ (equivalently $p_x\le p$).
\item If $p$ is a projection, $pxp=p$ and either $x\ge p$ or $0\le x\le p$, then $xp=p$. In particular if $\ran(p)=span(\xi)$ for some $\xi \in\mathcal H$, 
$$
\langle x\eta,\eta \rangle\ge \langle p\eta,\eta\rangle \textit{ for any } \eta\in\mathcal H
$$
and $\langle x \xi ,\xi\rangle =\langle \xi,\xi\rangle$ imply $x\xi=\xi$. There is a similar assertion for the other case.
\end{enumerate}
\end{lem}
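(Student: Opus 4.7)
The plan is to dispatch each of the four items with a standard operator-theoretic tool: uniqueness of the positive square root does the heavy lifting in items 1 and 2, a one-line algebraic manipulation suffices for item 3, and a short square-root identity handles item 4. Throughout I will exploit that in the polar decomposition $b=\nu|b|$, the partial isometry $\nu$ vanishes on $(\ran|b|)^\perp$ by construction, so $\nu^*\nu$ is automatically the projection onto $\overline{\ran|b|}$.

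For item 1, I square: $(\nu|b|\nu^*)^2=\nu|b|(\nu^*\nu)|b|\nu^*=\nu|b|^2\nu^*=bb^*=|b^*|^2$, where the second equality uses that $\nu^*\nu$ fixes $\ran|b|$. Since $\nu|b|\nu^*\ge 0$, uniqueness of the positive square root gives $|b^*|=\nu|b|\nu^*$. That $\nu\nu^*$ is the projection onto $\overline{\ran|b^*|}$ then reduces to the identity $\overline{\ran b}=\overline{\ran|b^*|}$, which follows from the standard fact $\overline{\ran T}=\overline{\ran TT^*}$ (a consequence of $\ker T^*=\ker TT^*$) applied to $T=b$.

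Item 2 follows the same template: from $b^*=|b|\nu^*$ one computes $|ab^*|^2=b|a|^2b^*=\nu|b||a|^2|b|\nu^*=\nu\,||a||b||^2\,\nu^*$, and since $\ran||a||b||^2=\ran(|b||a|^2|b|)\subseteq\ran|b|$ we get $\nu^*\nu\,||a||b||=||a||b||$; squaring $\nu||a||b||\nu^*$ then shows it equals $|ab^*|$ by uniqueness, and sandwiching by $\nu^*$ and $\nu$ yields the second identity after absorbing the flanking projections. Item 3 is immediate: $x=pxp$ gives $px=p(pxp)=pxp=x$, so $\ran x\subseteq\ran p$ and therefore $p_x\le p$.

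For item 4, given $pxp=p$ with $x\ge p$, I put $y=x-p\ge 0$. Then $pyp=pxp-p=0$, and since $pyp=(y^{1/2}p)^*(y^{1/2}p)$ is a positive operator vanishing, I conclude $y^{1/2}p=0$, whence $yp=0$ and $xp=p$; the case $0\le x\le p$ is symmetric with $y=p-x$. The rank-one consequence is immediate: $\langle x\eta,\eta\rangle\ge\langle p\eta,\eta\rangle$ is exactly the operator inequality $x\ge p$, the equality $\langle x\xi,\xi\rangle=\langle\xi,\xi\rangle$ forces $pxp=p$ (since $p$ is rank one on $\xi$), and evaluating the deduced identity $xp=p$ at $\xi$ yields $x\xi=\xi$. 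There is no genuine obstacle in any of this; the only bit of care required is tracking which projections absorb into which operators in item 2, and this is pure bookkeeping.
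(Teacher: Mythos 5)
Your proof is correct and follows essentially the same route as the paper: the same positivity argument $pyp=(y^{1/2}p)^*(y^{1/2}p)=0\Rightarrow y^{1/2}p=0$ for item 4, the same range-inclusion observation for item 3, and the same computation $|ab^*|^2=\nu|b||a|^2|b|\nu^*$ for item 2. The only cosmetic difference is that where the paper pushes the square root through $\nu(\cdot)\nu^*$ by a functional-calculus (polynomial) argument, you verify that the candidate $\nu\,||a||b||\,\nu^*$ squares correctly and invoke uniqueness of the positive square root -- an equivalent and equally standard step -- and you spell out item 1, which the paper dismisses as trivial.
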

\begin{proof}
1. is trivial, to prove 2. write the polar decompositions $a=u|a|,b=\nu|b|$. Note that $|ab^*|^2=\nu|b||a|^2|b|\nu^*$; since $\nu^*\nu|b|=|b|$ then  $(\nu|b||a|^2|b|\nu^*)^n=\nu(|b||a|^2|b|)^n\nu^*$ for any $n\in\mathbb N$, and an elementary functional calculus argument shows that
$$
|ab^*|=(\nu|b||a|^2|b|\nu^*)^{\frac12}=\nu(|b||a|^2|b|)^{\frac12}\nu^*=\nu | |a||b||\nu^*.
$$
On the other hand, since $\nu\nu^*\nu=\nu$, then $\nu\nu^*|ab^*|=|ab^*|=|ab^*|\nu\nu^*$, therefore from $\nu^*|ab^*|^2\nu=||a||b||^2$ taking square roots and using a similar argument we derive $\nu^*|ab^*|\nu=||a||b||$

3. If $pxp=x$ then $\ran x\subset \ran p$, therefore $p_x\le p$ or equivalently $pp_x=p_x$. Multiplying both sides by $x$ gives $x=px$.

4. Assume $x\ge p$ (the case $0\le x\le p$ can be treated in a similar fashion therefore its proof is omitted). Since $x-p\ge 0$, we have, for each $\eta\in\mathcal H$,
$$
\|(x-p)^{1/2}p\eta\|^2=\langle p(x-p)p\eta,\eta\rangle=0,
$$
thus $(x-p)^{1/2}p=0$ and multipliying by $(x-p)^{1/2}$ on the left we obtain $(x-p)p=0$ wich shows that $xp=p$.
\end{proof}

\subsection{Singular values}

Denote with  $\kh$  the compact operators on $\mathcal H$. Let $\lambda_k(x)$ ($k\in\mathbb N_0$) denote the $k$-th eigenvalue of the positive compact operator $x\in\bh$, arranged in decreasing order, 
$$
\|x\|=\lambda_0\ge \lambda_1\ge\cdots \lambda_k\ge\lambda_{k+1}\ge \cdots
$$
where we allow equality because each singular value is counted with multiplicity. Clearly $
\lambda_k(f(x))=|f|(\lambda_k(x))$ for any function defined in $\sigma(x)$.

\begin{rem}\label{autov}
For given $a,b\in \bh$ and $x\in \kh$, the min-max characterization of the singular values \cite[Theorem 1.5]{simon} and Lemma \ref{ellema}.2 easily imply that
\begin{enumerate}
\item $\lambda_k(axb)\le \|a\|\|b\|\lambda_k(x)$,
\item $\lambda_k(|ab^*|)=\lambda_k(||a||b||)$.
\end{enumerate}
\end{rem}


\subsection{Unitarily invariant norms}

For a given vector $a=(a_i)_{i\in\mathbb N_0}$ with $a_i\in\mathbb R$, we will denote with $a^{\downarrow}$ the rearrangement of $a$ in decreasing order, that is $a^{\downarrow}$ is a permutation of $a$ such that
$$
a_0^{\downarrow}\ge a_1^{\downarrow}\ge \cdots\ge a_k^{\downarrow}\ge a_{k+1}^{\downarrow}\ge \cdots.
$$

\medskip

Let $\|\cdot\|_{\phi}$ stand for a unitarily invariant norm in $\bh$, and $\phi:{\mathbb R}_+^{\mathbb N_0}\to\mathbb R_+$ its associated permutation invariant gauge \cite{simon}. 

\begin{defi}
We say that the norm is \textbf{strictly increasing} if, given two sequences $a=(a_i),b=(b_i)$ such that $0\le a_i\le b_i$ for all $i\in\mathbb N_0$ and $\phi(a)=\phi(b)$ implies that $a_i=b_i$ for all $i\in\mathbb N$ (see Hiai's paper \cite{hiai}, it is also property (3) in Simon's paper \cite{s2}). Examples of these norms on $\kh$ are the Schatten $p$-norms $1\le p<\infty$, and examples of non-strictly increasing norms are the supremum norm and the Ky-Fan norms. 

Note that we can always define
\begin{equation}\label{normastr}
\phi(a)=\sum\limits_{k\ge 0} a_k^{\downarrow} 2^{-k},
\end{equation}
which is a strictly increasing norm defined in the whole of $\kh$.
\end{defi}

\medskip

\begin{rem}
If $\ifi$ is not equivalent to the supremum norm $\|x\|=\sup\limits_{\|\xi\|_H=1}\|x\xi\|_H$, then
$\ifi=\{x\in\kh: \|x\|_{\phi}<\infty\}$ is a proper bilateral ideal in $\kh$ according to Calkin's theory. Assume that a symmetric norm has the Radon-Riesz property 
$$
\|x_n\|_{\phi}\to \|x\|_{\phi} \,\textit{ and } \, x_n\rightarrow x \,\textit{ weakly }\, \Longrightarrow\, \|x-x_n\|_\phi\to 0
$$
(see Arazy's paper \cite{a} on the equivalence for sequences and compact operators). Simon proved in \cite{s2} that in that case the norm is strictly increasing according to our definition. It is unclear for us if the assertion can be reversed.
\end{rem}

\subsection{Inequality}

\begin{rem}\label{nota} For given $a,b\in\kh$ we will always denote 
$$
\alpha_k=\lambda_k(|a|),\quad \beta_k=\lambda_k(|b|), \quad \gamma_k=\lambda_k(|ab^*|),\quad \delta_k=\lambda_k\left(\frac1p |a|^p +\frac1q |b|^q\right).
$$
Moreover, we will denote
$$
|a|=\sum_k \alpha_k a_k,\quad |b|=\sum_k\beta_k b_k,\quad |ab^*|=\sum_k \gamma_k p_k,\quad \frac1p |a|^p +\frac1q |b|^q=\sum_k \delta_k q_k
$$
the spectral decompositions of each operator, with $a_k,b_k,$etc. one dimensional projections. Note that we allow multiplicity, and if $\gamma_1=\cdots=\gamma_j$ for some finite $j$, the election of the first $q_j$ is arbitrary (i.e., it amounts to select an orthonormal base of that span).
\end{rem}

\begin{rem}\label{aef}
Concerning $a,b\in  \kh$, the following was proved in \cite{efz} by Erlijman, Farenick and Zeng: for each $k\in\mathbb N$,
$$
\lambda_k(|ab^*|)\le \lambda_k\left(\frac1p |a|^p +\frac1q |b|^q\right)
$$
hence there exists a partial isometry $u$ such that $q_k=up_ku^*$ and  $u^*u=\sum_k p_k$ the projection on the (closure of) the range of $|ab^*|$. Then, for any $a,b\in \kh$,
$$
u|ab^*|u^*\le \frac1p |a|^p +\frac1q |b|^q.
$$
This extended the original result of T. Ando \cite{ando} which was stated for positive matrices.
\end{rem}

\medskip

From their result, it can be deduced that the relevant condition to deal with the equality is $\gamma_k=\delta_k$ for all $k$, to be more precise:

\begin{lem}\label{gammaks}Let $a,b\in\kh$,  $p>1$, $1/p+1/q=1$.
\begin{enumerate}
\item If $|a|^p=|b|^q$, then $\alpha_k^p=\beta_k^q=\gamma_k=\delta_k$ for each $k\in\mathbb N_0$.
\item If either
$$
z|ab^*|z^* =\frac1p |a|^p +\frac1q |b|^q
$$ 
for some contraction $z\in \bh$, or 
$$
\|z|ab^*|w\|_\phi=\|\frac1p |a|^p +\frac1q |b|^q\|_\phi,
$$
for a pair of contractions $z,w\in\bh$ and a strictly increasing norm, then $\gamma_k=\delta_k$ for each $k\in\mathbb N_0$ and 
$$
u|ab^*|u^* =\frac1p |a|^p +\frac1q |b|^q.
$$
where $u$ is the partial isometry $u$ of the result in Remark \ref{aef}, i.e $up_ku^*=q_k$ for each $k$.
\end{enumerate}
\end{lem}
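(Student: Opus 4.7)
The plan is to treat the two parts separately: Part 1 is a direct functional calculus computation, while Part 2 reduces to the Erlijman-Farenick-Zeng inequality (Remark \ref{aef}) combined with either singular value monotonicity under two-sided multiplication (Remark \ref{autov}.1) or the strict increasing property of the gauge.

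For Part 1, I would set $c=|a|^p=|b|^q$ and observe via functional calculus that $|a|=c^{1/p}$ and $|b|=c^{1/q}$ are commuting functions of $c$, so that $|a||b|=c^{1/p+1/q}=c$. Remark \ref{autov}.2 then gives $\gamma_k=\lambda_k(||a||b||)=\lambda_k(c)$. Simultaneously, $\frac1p|a|^p+\frac1q|b|^q=\frac1pc+\frac1qc=c$, so $\delta_k=\lambda_k(c)$, while trivially $\alpha_k^p=\lambda_k(|a|^p)=\lambda_k(c)=\lambda_k(|b|^q)=\beta_k^q$. All four sequences coincide.

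For Part 2, put $T=\frac1p|a|^p+\frac1q|b|^q$ and recall that $\gamma_k\le\delta_k$ by Remark \ref{aef}. Under the first hypothesis $z|ab^*|z^*=T$ with $\|z\|\le 1$, Remark \ref{autov}.1 applied to the contractions $z,z^*$ gives $\delta_k=\lambda_k(T)=\lambda_k(z|ab^*|z^*)\le\gamma_k$, so equality throughout. Under the second hypothesis, reading $\lambda_k$ on the non-self-adjoint operator $z|ab^*|w$ as its $k$-th singular value, Remark \ref{autov}.1 yields $\lambda_k(z|ab^*|w)\le\|z\|\|w\|\gamma_k\le\delta_k$ for every $k$; the two decreasing sequences $(\lambda_k(z|ab^*|w))_k$ and $(\delta_k)_k$ are then termwise ordered and share the same $\phi$-value, so the strict increasing hypothesis on the norm forces them to coincide, which squeezes $\gamma_k$ between equal quantities and yields $\gamma_k=\delta_k$.

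Once $\gamma_k=\delta_k$ is established, the identity $u|ab^*|u^*=T$ is immediate from the spectral decompositions fixed in Remark \ref{nota}: since Remark \ref{aef} produces a partial isometry $u$ with $up_ku^*=q_k$, one computes $u|ab^*|u^*=\sum_k\gamma_k\,up_ku^*=\sum_k\gamma_k q_k=\sum_k\delta_k q_k=T$. I do not anticipate a conceptual obstacle; the only mildly delicate point is the overloaded use of $\lambda_k$ to mean singular values on non-positive operators, but this is the convention implicit in Remark \ref{autov}.
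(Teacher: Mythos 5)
Your proof is correct and follows essentially the same route as the paper: part 1 by functional calculus (your direct use of Remark \ref{autov}.2 via $|a||b|=c$ replaces the paper's equivalent conjugation argument $|ab^*|=\nu|a|^p\nu^*$ with two applications of Remark \ref{autov}.1), and part 2 by squeezing $\gamma_k\le\delta_k$ against $\lambda_k(z|ab^*|w)\le\gamma_k$ together with the strictly increasing property of $\phi$, exactly as in the paper. Your explicit final step $u|ab^*|u^*=\sum_k\gamma_k q_k=\sum_k\delta_k q_k$ is a welcome addition the paper leaves implicit.
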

\begin{proof}
To prove the first assertion, note that clearly $\delta_k=\alpha_k^p=\beta_k^q$. By Lemma \ref{ellema}.2, $|ab^*|=\nu|a|^p\nu^*$, and in particular 
$$
\lambda_k(|ab^*|)\le\lambda_k(|a|^p)=\lambda_k(|b|^q)
$$
by Remark (\ref{autov}.1). Now since $\nu$ is the partial isometry of $|b|$, then also $\nu^*|ab^*|\nu=\nu^*\nu|a|^p\nu^*\nu=|a|^p$ which in turn shows the reversed inequality, and then $\gamma_k=\alpha_k^p=\beta_k^q$ follows.

Regarding 2., note that if equality is attained by a contraction $z$, then by Remark \ref{aef} and Remark \ref{autov}.1
$$
\gamma_k\le \delta_k=\lambda_k(z|ab^*|z^*)\le \lambda_k(|ab^*|)=\gamma_k.
$$
Likewise, if equality is attained for a strictly increasing norm and a pair of contraction $z,w$, since
$$
\lambda_k(z|ab^*|w)\le \lambda_k(|ab^*|)=\gamma_k\le \delta_k=\lambda_k,
$$
then $\gamma_k=\delta_k$ for every $k$. \end{proof}

\subsection{Equality}

The following result will be crucial to obtain the proof of our main assertion.

\begin{prop}\label{pnoes2}
Let $0\le a,b\in\kh$. Let $1<p<2$ and $1/p+1/q=1$. If
$$
\lambda_k(|ab|)=\lambda_k\left(\frac1p a^p +\frac1q b^q\right)\quad \textit{ for all }k
$$
then $\ran|ba|\subset \overline{\ran b}$.
\end{prop}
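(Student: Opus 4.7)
The plan is to upgrade the singular-value equality to an operator equality via Lemma \ref{gammaks}, translate the desired range inclusion into an $a$-invariance of $\ker b$, and finally derive that invariance by exploiting operator convexity of $t\mapsto t^p$ (valid for $p\in[1,2]$) together with the explicit form of the operator equality.

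By Lemma \ref{gammaks}(2) applied with $z=u$, the partial isometry of Remark \ref{aef}, the hypothesis $\gamma_k=\delta_k$ for all $k$ upgrades to the operator equality
\[
u|ab|u^* = \frac{1}{p}a^p + \frac{1}{q}b^q.
\]
Since $(ba)^*=ab$ and $\overline{\ran|T|}=\overline{\ran T^*}$ for any $T\in\bh$, we have $\overline{\ran|ba|}=\overline{\ran ab}$; therefore $\ran|ba|\subset\overline{\ran b}$ is equivalent to $a$-invariance of $\overline{\ran b}$, and (by self-adjointness of $a$) to $a$-invariance of $\ker b$. Letting $P$ be the projection onto $\ker b$, the goal is to show $[a,P]=0$. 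Compressing the operator equality by $P$ on both sides and using $Pb^qP=0$ yields $Pa^pP=p\,Pu|ab|u^*P$; moreover, since $\overline{\ran|ab|}=\overline{\ran ba}\subset\overline{\ran b}=(\ker b)^\perp$, we have $u^*u\le I-P$, and hence $uP=0$, i.e. $u$ annihilates $\ker b$.

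Since $t\mapsto t^p$ is operator convex on $[0,\infty)$ for $p\in[1,2]$, the Hansen-Pedersen-Jensen inequality gives $(PaP)^p\le Pa^pP$; for $p\in(1,2)$ the strict operator convexity of $t^p$ forces equality here to imply $[a,P]=0$. Hence the proof reduces to producing the matching upper bound $Pa^pP\le(PaP)^p$, equivalently $p\,Pu|ab|u^*P\le(PaP)^p$. The main obstacle is this matching bound, since operator convexity goes the ``wrong way''. I would proceed in the block decomposition $\mathcal H=\ker b\oplus\overline{\ran b}$: the constraint $uP=0$ gives $u$ a specific block form and $|ab|^2=ba^2b$ is supported on $\overline{\ran b}$, so the desired bound should emerge from the positivity and structural constraints encoded by the operator equality, possibly combined with a scalar Young-type argument applied to appropriate eigenvectors of $PaP$. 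The hypothesis $p<2$ is essential twice over: for the availability of operator convexity of $t^p$ on $[1,2]$, and for the strictness that turns Jensen equality into commutation $[a,P]=0$.
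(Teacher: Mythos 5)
Your reduction is fine as far as it goes: the conclusion is indeed equivalent to $[a,P]=0$ for $P$ the projection onto $\ker b$; the operator equality $u|ab|u^*=\frac1p a^p+\frac1q b^q$ does follow from $\gamma_k=\delta_k$ exactly as in Lemma \ref{gammaks}; and the compressed identity $\frac1p Pa^pP=Pu|ab|u^*P$ together with $uP=0$ is correct (though note $uP=0$ controls the initial space of $u$, while the compression involves $u^*P$, i.e.\ the final space of $u$, which lies in the range projection of $\frac1p a^p+\frac1q b^q$ and need not sit inside $\overline{\ran b}$ --- so this observation buys you nothing directly). The genuine gap is that the proof stops exactly where the work begins: the matching bound $Pa^pP\le (PaP)^p$ is only asserted to ``emerge from positivity and structural constraints''. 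That inequality is the reverse of the Hansen--Pedersen--Jensen inequality, so establishing it is tantamount to establishing the equality case, which (even granting an equality criterion for general projections --- Corollary \ref{holderes} in the paper covers only rank-one compressions) is precisely the statement $[a,P]=0$ you set out to prove. No mechanism is offered for extracting it from the operator equality, and I do not see one that does not already contain the proposition; as written this is a plan with the key step missing, not a proof.

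For comparison, the paper's argument is of a completely different nature and uses $1<p<2$ quantitatively through $q>2$, not through operator convexity: it perturbs $b$ to $\be=b+\varepsilon(1-p_b)$, applies the Erlijman--Farenick--Zeng inequality to the pair $(a,\be)$, and compares with the hypothesis to get, for the top eigenvector $e_0$ of $|ba|$, an estimate of the form $\gamma_0^2+\varepsilon^2\|(1-p_b)ae_0\|^2\le\left(\gamma_0+\tfrac1q\varepsilon^q\right)^2$; since $q>2$, dividing by $\varepsilon^2$ and letting $\varepsilon\to0$ forces $(1-p_b)ae_0=0$. This is then iterated down the eigenvalue list (compressing by the finite-rank projections built from $e_0,\dots,e_k$), yielding $a(\ran|ba|)\subset\overline{\ran b}$, and a final functional-calculus argument with odd powers of $a$ upgrades this to $\ran|ba|\subset\overline{\ran b}$. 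If you want to salvage your route, you must supply an independent proof of $Pa^pP\le(PaP)^p$; absent that, the argument does not go through.
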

\begin{proof}
Let $\varepsilon >0$, let $p_b$ stand for the projection to the closure of the range of $b$, let
$\be=b+\varepsilon(1-p_b)$, then $\be^q=b^q+\varepsilon^q(1-p_b)\le b^q+\varepsilon^q$ and $\be^2=b^2+\varepsilon^2(1-p_b)$. Therefore
$$
|\be a|^2=a\be^2 a= ab^2a +\varepsilon^2 a(1-p_b)a.
$$
Let $|ba|=\sum\limits_{k\in\mathbb N_0}\gamma_k e_k\otimes e_k$ with $\gamma_k=\lambda_k(|ba|)$ and $\{e_k\}_k$ an orthonormal basis of $\ran|ba|$. Then since $\gamma_0=\|ab\|=\|ba\|$, we have $\langle ab^2ae_0,e_0\rangle=\|ba\|^2$ and
\begin{eqnarray}
\varepsilon^2\|(1-p_b)ae_0\|^2+\|ba\|^2 &\le & \|\be a\|^2=\|a\be\|^2\le \|\frac1p a^p+\frac1q \be^q\|^2\nonumber\\
&=  & \|\frac1p a^p+\frac1q b^q+\frac1q \varepsilon^q (1-p_b)\|^2\nonumber\\
&\le &\|\frac1p a^p+\frac1q b^q+\frac1q \varepsilon^q\|^2  =  \left[\|\frac1p a^p+\frac1q b^q\|+\frac1q \varepsilon^q\right]^2\nonumber\\
&  = & (\|ab\|+\frac1q\varepsilon^q)^2= \|ab\|^2+\frac2q\|ab\|\varepsilon^q+\frac{1}{q^2}\varepsilon^{2q}\nonumber
\end{eqnarray}
by Remark \ref{aef} and the hypothesis. Therefore, dividing by $\varepsilon^2$ and letting $\varepsilon\to 0$, since $q>2$, we conclude that $(1-p_b)ae_0=0$ or equivalently, $ae_0\in \overline{\ran b}$. 

We iterate the argument above for all $k$ such that $\gamma_k=\gamma_0$: let us abuse the notation and assume then that $\gamma_1<\gamma_0$. Then for all sufficiently small $\varepsilon\le \varepsilon_1$, 
$$
\gamma_1^2+\varepsilon^2\|(1-p_b)ae_1\|^2<\gamma_0^2.
$$
Therefore for all such $\varepsilon$, if $Q=e_0+e_1$ then
\begin{eqnarray}
\lambda_1\left(Q(\varepsilon^2a(1-p_b)a+ ab^2a)Q\right)&=&\lambda_1\left(\varepsilon^2\|(a(1-p_b)a)^{1/2}e_1\|^2e_1+\gamma_1^2e_1+\gamma_0^ 2e_0\right)\nonumber\\
&=&\gamma_1^2+\varepsilon^2\|(1-p_b)ae_1\|^2.\nonumber
\end{eqnarray}

Now by the same reasons as above (and since $\lambda_k(QAQ)\le \lambda_k(A)$ and $\lambda_k(A+tP)\le \lambda_k(A+t1)=\lambda_k(A)+t$ for $A\ge 0$, $t\in \mathbb R_{\ge 0}$ and $P^2=P=P^*$)
\begin{eqnarray}
\lambda_1\left(Q(\varepsilon^2a(1-p_b)a+ ab^2a)Q\right) &= & \lambda_1|Q|a\be|^2Q|\le \lambda_1|a\be|^2\le \lambda_1\left(\frac1p a^p+\frac1q \be^q\right)^2\nonumber\\
&\le &\lambda_1\left(\frac1p a^p+\frac1qb^q+\frac1q \varepsilon^q\right)^2\nonumber\\
& = & \left[\lambda_1\left(\frac1p a^p+\frac1qb^q\right) +\frac1q \varepsilon^q\right]^2\nonumber\\
&\le &\left(\lambda_1|ab|+\frac1q\varepsilon^q\right)^2=\gamma_1^2+\frac2q\gamma_1\varepsilon^q+\frac{1}{q^2}\varepsilon^{2q}.\nonumber
\end{eqnarray}
Therefore
$$
\gamma_1^2+\varepsilon^2\|(1-p_b)ae_1\|^2\le \gamma_1^2+\frac2q\gamma_1\varepsilon^q+\frac{1}{q^2}\varepsilon^{2q}
$$
and again, dividing by $\varepsilon$ and letting $\varepsilon\to 0$, we conclude that $ae_1\in \overline{\ran b}$. Proceeding recursively, we conclude that $a(\ran|ba|)\subset \overline{\ran b}$. Now if $\xi\in \mathcal H$, then $a|ba|\xi\in\overline{\ran(b)}$, therefore $a^2|ba|\xi=a(a|ba|\xi)\in a\overline{\ran(b)}\subset \overline{\ran(ab)}=\overline{\ran|ba|}$, and $a^3|ba|\xi=a(a^2|ba|\xi)\in a \overline{\ran|ba|}\subset \overline{\ran(b)}$.  Iterating this argument, we arrive to the conclusion that $a^{2n+1}(\ran|ba|)\subset\overline{\ran(b)}$ for all $n\in\mathbb N_0$. Using an approximation of $f=\chi_{\sigma(a)}$ by odd functions, we conclude that $p_a(\ran|ba|)=f(a)(\ran|ba|)\subset\overline{\ran(b)}$ where $p_a$ is the projection onto the closure of the range of $a$. Therefore $|ba|^2\xi=ab^2a\xi=p_aab^2a\xi=p_a|ba|^2\xi\subset \overline{\ran(b)}$,  which gives $\overline{\ran|ba|}=\overline{\ran(|ba|^2)}\subset \overline{\ran(b)}$. 
\end{proof}

\medskip

\begin{rem}\label{proyecciones}
Here are two remarks on projections, its verifications are left to the reader.
\begin{enumerate}
\item Let $b=b^*\in \bh$, $\eta\in \mathcal H$. Then $b(\eta\otimes \eta)b=(b \eta)\otimes(b\eta)$ and the projection onto $span(bx)$ is given by $\frac{(b\eta)\otimes(b\eta)}{\|b\eta\|^2}$.

\item Let $b=b^*\in \bh$ assume that $b\eta=\xi$, with $\|\xi\|=1$. Name $p$ the projection onto $\xi$, name $p_{\eta}$ the projection onto $\eta$. Then 
$$
\|\eta\|^2bp_{\eta}b=p\quad \textit{ and } \quad p_{\eta}b^2p_{\eta}=\frac{1}{\|\eta\|^2}p_{\eta}.
$$
\end{enumerate}
\end{rem}

\begin{lem}\label{minij}
Let $0\le x\in \kh$, let $\xi\in\mathcal H$ with $\|\xi\|=1$. Then
$$
\langle x^r \xi,\xi\rangle  \le \langle x\xi,\xi\rangle^r, \qquad  0<r<1
$$
with equality iff  $x\xi=\langle x\xi,\xi\rangle\xi$. Also
$$
\langle x \xi,\xi\rangle^s  \le \langle x^s\xi,\xi\rangle, \qquad  1<s
$$
with equality iff  $x\xi=\langle x\xi,\xi\rangle\xi$.
\end{lem}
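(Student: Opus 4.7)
The plan is to reduce the statement to Jensen's inequality for the spectral probability measure associated to $\xi$. Since $x\in\kh$ is positive and compact, it admits a diagonalization $x=\sum_k \lambda_k\, e_k\otimes e_k$ with $\{e_k\}_k$ an orthonormal basis of $\overline{\ran x}$, extended to an orthonormal basis of $\mathcal H$ (with eigenvalue $0$ on $\ker x$). Writing $\xi=\sum_k c_k e_k$, the normalization $\|\xi\|=1$ turns $\mu_k=|c_k|^2$ into a probability distribution on the index set, and functional calculus gives
$$\langle x^t\xi,\xi\rangle = \sum_k \mu_k\, \lambda_k^t\qquad(t>0).$$

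For the first inequality, the function $f(\lambda)=\lambda^r$ is concave on $[0,\infty)$ when $0<r<1$; Jensen's inequality applied to the probability measure $(\mu_k)$ then gives $\sum_k \mu_k \lambda_k^r \le \left(\sum_k \mu_k\lambda_k\right)^r$, which is precisely $\langle x^r\xi,\xi\rangle\le\langle x\xi,\xi\rangle^r$. Symmetrically, $f(\lambda)=\lambda^s$ is convex on $[0,\infty)$ when $s>1$, and Jensen yields $\langle x\xi,\xi\rangle^s\le\langle x^s\xi,\xi\rangle$. Convergence of both series is automatic since $(\lambda_k)$ is bounded and $\sum\mu_k=1$.

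For the equality clauses I would exploit that both power functions are \emph{strictly} concave (respectively convex) on $[0,\infty)$. Equality in Jensen's inequality for a strictly concave or convex function forces the probability weights $(\mu_k)$ to be concentrated on indices sharing a common value of $\lambda_k$; collecting those indices, $\xi$ lies in a single eigenspace, say $x\xi=\lambda_0\xi$, and pairing with $\xi$ identifies $\lambda_0=\langle x\xi,\xi\rangle$. The converse is immediate by functional calculus: if $x\xi=\langle x\xi,\xi\rangle\xi$, then $x^t\xi=\langle x\xi,\xi\rangle^t\xi$ and both sides of each inequality coincide.

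No step here looks genuinely hard; the only mild care is in the boundary case $\lambda_0=0$, but then both sides of the relevant inequality vanish and the eigen-relation $x\xi=0$ is consistent with every nonzero-weight index having $\lambda_k=0$, so the characterization is uniform.
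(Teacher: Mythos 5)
Your proof is correct, and it follows the same underlying reduction as the paper: pass to the spectral decomposition of $x$, observe that $t_i=\|p_i\xi\|^2$ (your $\mu_k=|c_k|^2$) is a probability distribution, and identify $\langle x^t\xi,\xi\rangle$ with a weighted power sum. The only real difference is the scalar tool: the paper applies H\"older's inequality with exponents $1/r$ and its conjugate, obtaining the equality case from the proportionality condition $x_it_i=ct_i$ in H\"older, and then deduces the $s>1$ statement from the $r<1$ one by the substitution $s=1/r$, $x\mapsto x^s$; you instead invoke Jensen's inequality for the strictly concave map $\lambda\mapsto\lambda^r$ (resp.\ strictly convex $\lambda\mapsto\lambda^s$) and treat both directions directly, getting the equality characterization from strict concavity/convexity. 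These are essentially equivalent in depth (the countable Jensen equality case you use is as standard as H\"older's equality condition), and your version has the mild advantage of handling $s>1$ without the substitution trick; your remark on the degenerate case $\langle x\xi,\xi\rangle=0$, where all weighted eigenvalues vanish and $x\xi=0$, correctly covers the point where a supporting-line argument at the mean would be delicate.
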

\begin{proof}
Let $x=\sum x_i p_i$ be a spectral decomposition of $x$, with $\sum_i p_i=1$. Let $t_i=\|p_i\xi\|^2$, then $\sum_i t_i =1$. Using H\"older's inequality for sequences, with $p=1/r>1$, we obtain
$$
\langle x^r \xi,\xi\rangle =\sum_i x_i^r t_i = \sum_i x_i^r t_i^{1/p}\, t_i^{1/q}\le \left(\sum_i x_i t_i \right)^r\left(\sum_i t_i\right)^{1/q}=\langle x\xi,\xi\rangle^r.
$$
Assuming equality, in H\"older's inequality, it must be $x_it_i=ct_i$ for all $i$, therefore  $x\xi=\langle x\xi,\xi\rangle \xi$. Taking $s=1/r$ and replacing $x$ with $x^s$, the proof of the other case ($s>1$) is straightforward.
\end{proof}

\medskip

A rewriting of the lemma above, gives the following:

\begin{coro}\label{holderes}
Let $q$ be a rank one projection and $0\le x\in \kh$ then 
$$
qx^rq\le (qxq)^r,\qquad  0<r< 1
$$
with equality iff $xq=cq$ for some $c\ge 0$ and
$$
(qxq)^s\le qx^sq,\qquad 1<s,
$$
with equality iff $xq=cq$.
\end{coro}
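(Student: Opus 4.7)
The plan is to reduce the corollary directly to Lemma \ref{minij} by exploiting that a rank-one projection can be written as $q = \xi\otimes\xi$ for some unit vector $\xi \in \mathcal H$. Under this identification, the key algebraic fact is that for any $y \in \bh$, one has $qyq = \langle y\xi,\xi\rangle\,q$, because $qyq\eta = \langle y\xi,\eta\rangle\,\langle \xi,\cdot\rangle^* = \langle y\xi,\xi\rangle\langle\eta,\xi\rangle \xi$. This reduces every sandwiched expression to a scalar multiple of $q$.

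Applying this observation with $y=x$ and $y=x^r$ yields $qxq = \langle x\xi,\xi\rangle\,q$ and $qx^rq = \langle x^r\xi,\xi\rangle\,q$. Since $q$ is a projection, functional calculus gives $(qxq)^r = \langle x\xi,\xi\rangle^r\,q$. So the operator inequality $qx^rq \le (qxq)^r$ is equivalent to the scalar inequality
\[
\langle x^r\xi,\xi\rangle \le \langle x\xi,\xi\rangle^r,
\]
which is precisely the content of the first part of Lemma \ref{minij}. The inequality $(qxq)^s \le qx^sq$ for $s>1$ follows in the same way from the second part of Lemma \ref{minij}.

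For the equality cases, Lemma \ref{minij} tells us that equality holds iff $x\xi = \langle x\xi,\xi\rangle\,\xi$, i.e.\ $\xi$ is an eigenvector of $x$ with eigenvalue $c = \langle x\xi,\xi\rangle \ge 0$. I would then verify that this condition is equivalent to $xq = cq$: if $x\xi = c\xi$, then $xq = (x\xi)\otimes\xi = c\,\xi\otimes\xi = cq$; conversely, evaluating $xq = cq$ at $\xi$ recovers $x\xi = c\xi$. Both directions of the biconditional thus transport cleanly from Lemma \ref{minij} to the operator statement.

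There is no real obstacle here; the corollary is essentially a restatement of Lemma \ref{minij} in operator form, and the only subtlety is remembering to pass between $q$ and the unit vector $\xi$ that spans its range in order to recognize the scalar equality condition $x\xi = c\xi$ as the operator equation $xq = cq$.
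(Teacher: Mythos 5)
Your proof is correct and is essentially the paper's own argument: the paper presents the corollary as a direct rewriting of Lemma \ref{minij}, obtained exactly as you do by writing $q=\xi\otimes\xi$, noting $qyq=\langle y\xi,\xi\rangle q$, and translating the scalar equality condition $x\xi=\langle x\xi,\xi\rangle\xi$ into $xq=cq$.
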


\medskip

What follows is the statement that tells us that the relevant hypothesis is neither on the operator equality, nor on the norm equality, but the singular numbers equality.

\begin{teo}\label{igual}
Assume that $a,b\in\kh$, $p>1$, $1/p+1/q=1$. If
$$
\lambda_k(|ab^*|)=\lambda_k\left(\frac1p |a|^p +\frac1q |b|^q\right)
$$ 
for all $k\in\mathbb N_0$, then $|a|^p=|b|^q$.
\end{teo}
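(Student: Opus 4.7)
The plan is to reduce to positive operators, upgrade the singular-value equality to an operator equality, use Proposition \ref{pnoes2} to obtain a crucial range condition, and then inductively peel off common top eigenvectors. Set $A:=|a|$ and $B:=|b|$. By Remark \ref{autov}.2 the hypothesis reads $\lambda_k(|AB|) = \lambda_k(M)$ for all $k$, where $M := \tfrac{1}{p}A^p + \tfrac{1}{q}B^q$, and the goal $|a|^p=|b|^q$ becomes $A^p=B^q$. Lemma \ref{gammaks}(2) promotes this to the operator equality
\[
u|AB|u^* = M,
\]
for the Erlijman--Farenick--Zeng partial isometry $u$ with $u^*u = p_{|AB|}$. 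Assume first $1<p<2$ (so $q>2$); the case $p>2$ is handled symmetrically by swapping $A\leftrightarrow B$ and $p\leftrightarrow q$ in Proposition \ref{pnoes2}, and $p=2$ is handled either by a perturbation/limit argument in $p$ or directly via the operator inequality $\tfrac{1}{2}(A^2+B^2)\ge|AB|$ together with its equality case. Under $1<p<2$, Proposition \ref{pnoes2} gives $\overline{\ran|BA|}\subset\overline{\ran B}$; combined with the automatic $\overline{\ran|BA|}\subset\overline{\ran A}$ (from $|BA|^2=AB^2A$), every eigenvector of $|BA|$ lies in $\overline{\ran A}\cap\overline{\ran B}$.

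The heart of the proof is to show that a unit top eigenvector $\xi_0$ of $|BA|$, with $|BA|\xi_0=\gamma_0\xi_0$, is a joint eigenvector of $A$ and $B$ whose eigenvalues $\alpha_0,\beta_0$ satisfy Young equality $\alpha_0^p=\beta_0^q=\gamma_0$. Applying Corollary \ref{holderes} (the rank-one H\"older inequality for the projection $\xi_0\otimes\xi_0$ with $s=p,q>1$) together with scalar Young yields the chain
\[
\langle A\xi_0,\xi_0\rangle\langle B\xi_0,\xi_0\rangle \le \tfrac{1}{p}\langle A\xi_0,\xi_0\rangle^{p}+\tfrac{1}{q}\langle B\xi_0,\xi_0\rangle^{q} \le \tfrac{1}{p}\langle A^p\xi_0,\xi_0\rangle+\tfrac{1}{q}\langle B^q\xi_0,\xi_0\rangle = \langle M\xi_0,\xi_0\rangle \le \gamma_0.
\]
If this chain collapses to equality, then the sharp case of Corollary \ref{holderes} forces $A\xi_0=\alpha_0\xi_0$ and $B\xi_0=\beta_0\xi_0$, while scalar Young equality forces $\alpha_0^p=\beta_0^q=\alpha_0\beta_0=\gamma_0$, giving the desired joint-eigenvector conclusion.

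The main obstacle is producing the matching lower bound $\langle A\xi_0,\xi_0\rangle\langle B\xi_0,\xi_0\rangle\ge\gamma_0$ that saturates the chain. Here the operator equality and the range condition must act in tandem: from $\gamma_0^2=\|BA\xi_0\|^2=\langle AB^2A\xi_0,\xi_0\rangle$ and $\xi_0\in\overline{\ran B}$ one can relate $\xi_0$ to the top eigenvector $\eta_0 := BA\xi_0/\gamma_0$ of $|AB|$, and then to the top eigenvector $\zeta_0 := u\eta_0$ of $M$. The target is to identify these three unit vectors (equivalently, to show that $u$ is the identity on the top eigenspace), yielding $\langle M\xi_0,\xi_0\rangle=\gamma_0$ and collapsing the chain. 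Lemma \ref{ellema}.4 and Remark \ref{proyecciones}, both of which handle equality cases for projections sandwiched between positive operators, are the expected technical tools for this step.

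Once the joint top eigenvector $\xi_0$ is in place, both $A$ and $B$ leave $\xi_0^\perp$ invariant; the restrictions $A':=A|_{\xi_0^\perp}$ and $B':=B|_{\xi_0^\perp}$ are positive compact operators on $\xi_0^\perp$ that inherit the singular-value equality $\lambda_k(|A'B'|)=\lambda_k\bigl(\tfrac{1}{p}(A')^p+\tfrac{1}{q}(B')^q\bigr)$ for all $k$, after stripping the common top summand $\gamma_0(\xi_0\otimes\xi_0)$ from $|BA|$, $M$, $A^p$, and $B^q$. Iterating along the countable non-zero spectrum of $|BA|$ (or, equivalently, applying Zorn's lemma on the joint eigensystem) produces an orthonormal system $\{\xi_k\}$ simultaneously diagonalizing $A$ and $B$, with $\alpha_k^p=\beta_k^q$ for each $k$. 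Hence $A^p$ and $B^q$ have identical spectral decompositions, so $A^p=B^q$, that is, $|a|^p=|b|^q$.
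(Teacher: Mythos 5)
Your overall architecture (reduce to $A=|a|$, $B=|b|$, invoke Proposition \ref{pnoes2} when $1<p<2$, analyze a top eigenvector, then peel off rank-one pieces and iterate) coincides with the paper's, but the step you yourself call ``the main obstacle'' is precisely the heart of the theorem and you do not prove it: you only name the vectors $\xi_0,\eta_0,\zeta_0$ you would like to identify and the lemmas you ``expect'' to use. Moreover, the route you sketch would not suffice even if completed. Establishing $\langle M\xi_0,\xi_0\rangle=\gamma_0$ collapses only the \emph{last} link of your chain $\langle A\xi_0,\xi_0\rangle\langle B\xi_0,\xi_0\rangle\le\frac1p\langle A\xi_0,\xi_0\rangle^{p}+\frac1q\langle B\xi_0,\xi_0\rangle^{q}\le\langle M\xi_0,\xi_0\rangle\le\gamma_0$; to invoke the equality case of Corollary \ref{holderes} you need equality in the \emph{middle} link, i.e.\ the lower bound $\frac1p\langle A\xi_0,\xi_0\rangle^{p}+\frac1q\langle B\xi_0,\xi_0\rangle^{q}\ge\gamma_0$, and nothing in your setup yields it (there is no a priori inequality between $\gamma_0=\langle |BA|\xi_0,\xi_0\rangle$ and $\langle A\xi_0,\xi_0\rangle\langle B\xi_0,\xi_0\rangle$). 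The paper obtains the missing lower bound by a different mechanism: it works with the auxiliary vector $\eta$ with $B\eta=\xi$ ($\xi$ a top eigenvector of $|AB|$), deduces from $BA^2B\ge\gamma_0^2\,\xi\otimes\xi=\gamma_0^2\|\eta\|^2Bp_\eta B$ the compressed inequality $(p_BAp_B)^2\ge\gamma_0^2\|\eta\|^2p_\eta$ on $\overline{\ran B}$ (this is where Proposition \ref{pnoes2} is actually used, so that replacing $A$ by $p_BAp_B$ is harmless), then applies operator monotonicity of $t\mapsto t^{p/2}$ and operator convexity of $t\mapsto t^{p}$ to get $\langle A^p\eta,\eta\rangle/\|\eta\|^2\ge\gamma_0^{p}\|\eta\|^{p}$, and Corollary \ref{holderes} to the exact identity $p_\eta B^2p_\eta=\|\eta\|^{-2}p_\eta$ to get $\langle B^q\eta,\eta\rangle/\|\eta\|^2\ge\|\eta\|^{-q}$; scalar Young applied to the numbers $\gamma_0\|\eta\|$ and $1/\|\eta\|$ then pins both ends of the chain at $\gamma_0$, forcing every intermediate inequality to be an equality. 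None of this machinery appears in your proposal.

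Two further problems. Your fallback for $p=2$ rests on the operator inequality $|AB|\le\frac12(A^2+B^2)$, which is false in general (take $A$ the projection onto $e_1$ and $B$ the projection onto $(e_1+e_2)/\sqrt2$); the whole point of the Erlijman--Farenick--Zeng result is that $|AB|$ must be conjugated by a partial isometry before it sits below $\frac1pA^p+\frac1qB^q$. A ``perturbation/limit argument in $p$'' is not available either, since the hypothesis is an equality at one fixed $p$; the paper treats $p=2$ by a separate direct computation with the same vector $\eta$, using Lemma \ref{ellema}.4 where the $q/2>1$ strictness is lost. Finally, at the end of your induction you conclude $A^p=B^q$ from the joint eigensystem $\{\xi_k\}$ alone; one must still rule out residual pieces of $A$ and $B$ supported on the orthogonal complement of that system, which the paper does by writing $\frac1pA^p+\frac1qB^q=|AB|+T$ with $T\ge0$ and $T|AB|=0$, and observing that equality of the eigenvalue lists with multiplicities forces $T=0$.
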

\begin{proof}
Since $\lambda_k(|ab^*|)=\lambda_k(|ba^*|)$, exchanging $a$ with $b$ if necessary we can assume that $1<p\le 2$. It will be easier to deal first with $a,b\ge 0$. We follow the notation of Remark \ref{nota}. 

Since $ba^2b=|ab|^2=\sum_k\gamma_k^2 p_k$, if $p_0=\xi\otimes\xi$ with $\xi\in\mathcal H$ and $\|\xi\|=1$, then $ba^2b\xi=\gamma_0^2\xi$. Let $\eta=\frac{1}{\gamma_0^2}p_b a^2b\xi$; then  $\eta\in \overline{\ran b}$ and $b\eta=\xi$. Let $p_{\eta}$ be the projection onto $span(\eta)$, then $\|\eta\|^2bp_{\eta}b=p_0$ by Remark \ref{proyecciones}.2. Observe that 
\begin{equation}\label{aaa}
ba^2b \ge \gamma_0^2 p_0= \gamma_0^2\|\eta\|^2 bp_{\eta} b.
\end{equation}
Now we have to deal with two cases separately, regarding whether $p=2$ or $p\ne 2$.

\textbf{Case $p\ne 2$}. By Proposition \ref{pnoes2}, we have $p_b|ba|=|ba|$, but $\ran|ba|=\ran(ab)$, hence if we name $\overline{a}=p_bap_b$, then
$$
b\overline{a}^2b=bp_bap_bp_bap_bb=bap_bab=ba^2b\ge \gamma_0^2\|\eta\|^2 bp_{\eta} b.
$$
Therefore $\overline{a}^2\ge \gamma_0^2\|\eta\|^2 p_{\eta}$ as operators acting on $\mathcal H'=\overline{\ran b}$. Since $1/2<p/2<1$, the operator monotony of $t\mapsto t^{p/2}$ implies that in $\mathcal H'$, we have
$$
\overline{a}^p\ge \gamma_0^p\|\eta\|^p p_{\eta}.
$$
This also implies
\begin{equation}\label{ap}
\frac{\langle \overline{a}^p\eta ,\eta\rangle}{\|\eta\|^2} \ge \gamma_0^p \|\eta\|^p.
\end{equation}

On the other hand, by Remark \ref{proyecciones}.2 and Corollary \ref{holderes} with $s=q/2> 1$,
\begin{equation}\label{bestr}
\frac{1}{\|\eta\|^q}p_{\eta}=\left(\frac{p_{\eta}}{\|\eta\|^2}\right)^{q/2}=\left( p_{\eta} b^2p_{\eta}\right)^{q/2}\le p_{\eta} b^q p_{\eta},
\end{equation}
equivalently
\begin{equation}\label{bq}
\frac{1}{\|\eta\|^q}\langle \eta, \eta\rangle \le  \langle b^q \eta,\eta \rangle.
\end{equation}
By Young's numeric inequality
\begin{equation}\label{yn}
\gamma_0=\gamma_0\|\eta\|\frac{1}{\|\eta\|}\le \frac1p\gamma_0^p\|\eta\|^p+\frac1q \frac{1}{\|\eta\|^q}.
\end{equation}

Since $1<p<2$, the map $t\mapsto t^p$ is operator convex \cite[Theorem 2.4]{pedersen}, therefore $\overline{a}^p=(p_bap_b)^p\le p_ba^pp_b$, hence combining this with (\ref{ap}), (\ref{bq}) and (\ref{yn}) gives
\begin{eqnarray}\label{t}
\gamma_0  &\le & \frac1p \frac{\langle \overline{a}^p \eta,\eta \rangle}{\|\eta\|^2}   +  \frac1q\frac{\langle b^q \eta,\eta\rangle}{\|\eta\|^2}\le \frac1p \frac{\langle p_b a^p p_b \eta,\eta \rangle}{\|\eta\|^2}   +  \frac1q\frac{\langle b^q \eta,\eta\rangle}{\|\eta\|^2} \nonumber\\
&= & \frac1p \frac{\langle a^p \eta,\eta \rangle}{\|\eta\|^2}   +  \frac1q\frac{\langle b^q \eta,\eta\rangle}{\|\eta\|^2}=\frac{1}{\|\eta\|^2}\left\langle\left( \frac1p a^p +\frac1q b^q \right)\eta,\eta\right\rangle\le \gamma_0\nonumber
\end{eqnarray}
by the hypothesis on the $\lambda_k$.

From here we can derive several conclusions. The first one, since there is equality in Young's numeric inequality (\ref{yn}), is that $\gamma_0=\frac{1}{\|\eta\|^q}$. The second one, since we have equality in (\ref{bestr}), is that $b^2\eta=\frac{1}{\|\eta\|^2}\eta=\gamma_0^{2/q}\eta$ (Lemma \ref{minij}), therefore $\xi=b\eta=\gamma_0^ {1/q}\eta$. The third one, since $0\le  \frac1p a^p +\frac1q b^q\le \gamma_0 1$ and now
$$
\frac{1}{\|\eta\|^2}\left\langle\left( \frac1p a^p +\frac1q b^q \right)\eta,\eta\right\rangle=\frac{1}{\|\xi\|^2}\left\langle\left( \frac1p a^p +\frac1q b^q \right)\xi,\xi\right\rangle=\gamma_0
$$
is that (Lemma \ref{ellema}.4)
$$
\left(\frac1p a^p+\frac1q b^q\right) \xi=\gamma_0 \xi
$$
and rearranging if necessary the basis of  $ker((\frac1p a^p +\frac1q b^q)-\gamma_0 1)$,   we conclude $p_0=p_{\eta}=p_{\xi}=q_0$. Note that
$$
\gamma_0\xi=\frac1p a^p\xi+\frac1q b^q\xi=\frac1p a^p\xi +\frac1q \gamma_0\xi,
$$
which implies that $a^p\xi=\gamma_0\xi$; with a similar argument and since 
$$
0\le \frac1p \overline{a}^p+\frac1q b^q\le \frac1p p_ba^pp_b+\frac1q b^q= p_b( \frac1p p_ba^pp_b+\frac1q b^q)p_b\le \gamma_0 p_b\le \gamma_0 1
$$
we deduce that $\overline{a}^p=\gamma_0\xi$ also, therefore $a\xi=\gamma_0^{1/p}\xi$. 

We now proceed with an induction argument. Write
$$
a=\sum\limits_{\overline{\alpha}_j>\gamma_0^{1/p}}\overline{\alpha}_j \overline{a_j}+ \sum\limits_{\alpha_k\le \gamma_0^{1/p}}\alpha_k a_k, 
$$
with $a_k,\overline{a}_j$ rank one disjoint projections and $a_k\overline{a}_j=0$ for all $k,j$. Then rearranging if necessary  $\alpha_0=\gamma_0^{1/p}$, $a_0=p_0$. Write similarly
$$
b=\sum\limits_{\overline{\beta}_j>\gamma_0^{1/q}}\overline{\beta}_j \overline{b_j}+ \sum\limits_{\beta_k\le \gamma_0^{1/q}}\beta_k b_k, \quad \beta_0=\gamma_0^{1/q},\, b_0=p_0.
$$
Let $\overline{a}=(1-p_0)a(1-p_0)$ and $\overline{b}=(1-p_0)b(1-p_0)$, then  $p_0\overline{a}=p_0\overline{b}=0$,
$$
a=\overline{a}+\gamma_0^{1/p} p_0,\qquad b=\overline{b}+\gamma_0^{1/q} p_0,
$$
$$
ab=\overline{a}\overline{b}+\gamma_0p_0, \quad |ab|=|\overline{a}\overline{b}|+\gamma_0 p_0,
$$
and
$$
\frac1p \overline{a}^p +\frac1q \overline{b}^q+\gamma_0p_0=\frac1p a^p +\frac1q b^q.
$$
Therefore 
$$
\lambda_0\left(\frac1p \overline{a}^p+\frac1q \overline{b}^q\right)=\lambda_1\left(\frac1p a^p+\frac1q b^q\right)=\lambda_1(|ab|)=\lambda_0(|\overline{a}\overline{b}|),
$$
and iterating the above construction we arrive to
$$
a=\overline{a}+\sum_{k}\gamma_k^{1/p} p_k,\quad b=\overline{b}+\sum_k\gamma_k^{1/q} p_k 
$$
with $\overline{a}p_k=\overline{b}p_k=0$ for each $j,k\in\mathbb N_0$. Then
$$
\frac1p a^p+\frac1q b^q = \sum_k \lambda_k p_k + \frac1p \overline{a}^p+\frac1q \overline{b}^q=|ab|+ \frac1p \overline{a}^p+\frac1q \overline{b}^q=|ab|+T
$$
with $T\ge 0$ compact and $T|ab|=0$.  Now $\lambda_k(|ab|)=\lambda_k(\frac1p a^p+\frac1q b^q)$ for all $k$, which means equal eigenvalues with equal (and finite) multiplicities, a fact that forces $T=0$, therefore $\overline{a}=\overline{b}=0$, from which the claim $a^p=b^q$ follows for $a,b\ge 0$, assuming $1<p<2$. 

\medskip

\textbf{Case $p=2$}. Let us now return to the case we skipped. From (\ref{aaa}), we know that $p_ba^2p_b\ge \gamma_0^2\|\eta\|^2p_\eta$ on the whole $\mathcal H$, therefore
\begin{eqnarray}
\gamma_0  &\le & \frac12\frac{\gamma_0^2\|\eta\|^2}{\|\eta\|^2}+\frac12\frac{1}{\|\eta\|^2}\le \frac12 \frac{\langle p_ba^2p_b \eta,\eta \rangle}{\|\eta\|^2}   +  \frac12\frac{\langle b^2 \eta,\eta\rangle}{\|\eta\|^2}=\frac{1}{\|\eta\|^2}\left\langle\left( \frac12 p_ba^2p_b +\frac12 b^2 \right)\eta,\eta\right\rangle\nonumber\\
&= &  \frac{1}{\|\eta\|^2}\left\langle\left(p_b\left( \frac12 a^2 +\frac12 b^2 \right)p_b\right)\eta,\eta\right\rangle =\frac{1}{\|\eta\|^2}\left\langle\left( \frac12 a^2 +\frac12 b^2 \right)\eta,\eta\right\rangle \le \gamma_0\nonumber
\end{eqnarray}
since $\eta\in\ran(b)$. Then from the equality in the numerical inequality (\ref{yn}) we derive that $\lambda_0=\|\eta\|^{-2}$, and $(\frac12 a^2 +\frac12 b^2)\eta=\gamma_0\eta$ as before. Since $q=2$, we have lost the strict inequality in (\ref{bestr}) regarding $b$. However, Since now $\langle p_ba^2p_b \eta,\eta  \rangle\ge \gamma_0^2{\|\eta\|^2}$ must be an equality, from Lemma \ref{ellema}.4 we conclude that $p_ba^2\eta=p_ba^2p_b\eta=\lambda \eta$ for some positive $\lambda$, hence $b^2\eta=(2\gamma_0-\lambda)\eta$ also. Recalling $1=\|\xi\|^2=\|b\eta\|^2=\langle b^2\eta,\eta\rangle=(2\gamma_0- \lambda)\|\eta\|^2=(2\gamma_0-\lambda)\gamma_0^{-1}$, we obtain $\lambda=\gamma_0$. This tells us that $b\eta=\gamma_0^{1/2}\eta=a\eta$. The rest of the argument follows as in the case of $p<2$. 

Returning to the original statement, if for arbitrary compact $a,b$, we have equality of singular values, since $\lambda_k(|ab^*|)=\lambda_k(||a||b||)$ (Remark \ref{autov}.2), we obtain $|a|^p=|b|^q$.
\end{proof}

\bigskip

Let us resume all the results in one clear cut statement, the main result of this paper:

\begin{teo}\label{elteo}
Let $a,b\in \kh$. If $p>1$ and $1/p+1/q=1$, then the following are equivalent:
\begin{enumerate}
\item $|a|^p=|b|^q$.
\item $z|ab^*|z^* =\frac1p |a|^p +\frac1q |b|^q$ for some contraction $z\in \bh$ 
\item $\|z|ab^*|w\|_{\phi} =\|\frac1p |a|^p +\frac1q |b|^q\|_{\phi}$ for a pair of contractions $z,w\in\bh $ and $\|\cdot\|_{\phi}$ a \textbf{strictly increasing} symmetric norm. 
\item $\lambda_k(|ab^*|)=\lambda_k\left(\frac1p |a|^p+\frac1q |b|^q\right)$ for all $k\in\mathbb N_0$.
\end{enumerate}
\end{teo}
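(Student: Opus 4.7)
The plan is to establish the theorem by proving the cyclic chain of implications $(1)\Rightarrow(2)\Rightarrow(3)\Rightarrow(4)\Rightarrow(1)$. Three of these four steps follow directly from earlier material: $(4)\Rightarrow(1)$ is precisely Theorem~\ref{igual}, $(3)\Rightarrow(4)$ is contained in Lemma~\ref{gammaks}.2, and $(2)\Rightarrow(3)$ is trivial---given the contraction $z$ of $(2)$, set $w:=z^{*}$, so that $z|ab^{*}|w=z|ab^{*}|z^{*}=\tfrac{1}{p}|a|^p+\tfrac{1}{q}|b|^q$ holds as an operator identity, and hence the $\phi$-norms on both sides agree for any strictly increasing symmetric norm (e.g.\ the norm defined in (\ref{normastr})). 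This leaves $(1)\Rightarrow(2)$ as the only substantive step.

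For $(1)\Rightarrow(2)$, assume $|a|^p=|b|^q$. Lemma~\ref{gammaks}.1 gives $\gamma_k=\delta_k$ for every $k$, and Remark~\ref{aef} then supplies a partial isometry $u$ with $u^{*}u=\sum_k p_k$ equal to the projection onto $\overline{\ran|ab^{*}|}$ such that $A:=u|ab^{*}|u^{*}\le\tfrac{1}{p}|a|^p+\tfrac{1}{q}|b|^q=:B$. Because the initial projection of $u$ matches the support of $|ab^{*}|$ exactly, conjugation by $u$ realises a unitary equivalence between $|ab^{*}|$ on $\overline{\ran|ab^{*}|}$ and $A$ on $\ran u$, so $\lambda_k(A)=\gamma_k=\delta_k=\lambda_k(B)$ for all $k$. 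To conclude $(2)$ with $z=u$ it suffices to upgrade the inequality $A\le B$ to an equality, which I isolate as the auxiliary claim: \emph{if $A,B\in\kh$ with $0\le A\le B$ and $\lambda_k(A)=\lambda_k(B)$ for all $k$, then $A=B$.}

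I would prove the auxiliary claim by peeling off eigenspaces of $B$ one at a time. Set $\lambda_0:=\|B\|=\|A\|$ and let $E_0$ be the (finite-dimensional) eigenspace of $B$ for $\lambda_0$. For any unit $\xi\in E_0$, $\langle A\xi,\xi\rangle\le\langle B\xi,\xi\rangle=\lambda_0=\|A\|$ must be an equality; since $\lambda_0 I-A\ge 0$, this gives $\|(\lambda_0 I-A)^{1/2}\xi\|^{2}=0$ and hence $A\xi=\lambda_0\xi$. Thus $A$ and $B$ coincide on $E_0$, both leave $E_0^{\perp}$ invariant, and the restrictions still satisfy the hypotheses with the top $\dim E_0$ entries removed from each eigenvalue list. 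Iterating over the nonzero eigenspaces of $B$ yields $A=B$ on $\overline{\ran B}$, while on $\ker B=(\overline{\ran B})^{\perp}$ the sandwich $0\le A\le 0$ forces $A=0=B$. The orthogonal decomposition $\h=\ker B\oplus\overline{\ran B}$ then gives $A=B$ on all of $\h$, proving $(2)$ and closing the cycle. The main care needed is that this eigenspace iteration really exhausts $\h$, which is guaranteed by the compactness of $B$; beyond that, the argument is essentially bookkeeping on top of Theorem~\ref{igual} and Lemma~\ref{gammaks}.
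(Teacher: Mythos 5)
Your implications $(4)\Rightarrow(1)$, $(3)\Rightarrow(4)$ and $(2)\Rightarrow(3)$ (with $w=z^*$ and the norm (\ref{normastr})) are exactly the paper's argument. The divergence is in $(1)\Rightarrow(2)$, and there your auxiliary claim is proved with a genuine gap: you take a unit $\xi$ in the top eigenspace $E_0$ of $B$ and assert that $\langle A\xi,\xi\rangle\le\langle B\xi,\xi\rangle=\lambda_0=\|A\|$ ``must be an equality'', but nothing you have written forces this --- from $A\le B$ and $\|A\|=\lambda_0$ you only get $\langle A\xi,\xi\rangle\le\lambda_0$, and a priori an eigenvector of $B$ at the top need not be one of $A$. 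The claim itself is true, but the peeling argument has to run the other way: take a unit eigenvector $\xi$ of $A$ with $A\xi=\lambda_0\xi$; then $\lambda_0=\langle A\xi,\xi\rangle\le\langle B\xi,\xi\rangle\le\|B\|=\lambda_0$, so $(\lambda_0-B)^{1/2}\xi=0$ and $B\xi=\lambda_0\xi$. Hence the top eigenspace of $A$ is contained in that of $B$, and since the equal eigenvalue lists give equal (finite) multiplicities, the two eigenspaces coincide; only then can you restrict to the common orthocomplement and iterate, finishing on $\ker B$ as you do.

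Beyond the fix, the detour is unnecessary on two counts. Within your own setup, Remark \ref{aef} gives $q_k=up_ku^*$, so $u|ab^*|u^*=\sum_k\gamma_k q_k$ while $\frac1p|a|^p+\frac1q|b|^q=\sum_k\delta_k q_k$; once Lemma \ref{gammaks}.1 gives $\gamma_k=\delta_k$ for all $k$, the operator equality follows term by term, with no rigidity lemma needed (this mechanism is exactly how Lemma \ref{gammaks}.2 obtains its final equality). The paper is shorter still: with $b=\nu|b|$ and $|a|^p=|b|^q$ one has $|a|\,|b|=|b|^{q/p+1}=|b|^q\ge 0$, so Lemma \ref{ellema}.2 yields $\nu^*|ab^*|\nu=|\,|a||b|\,|=|b|^q=\frac1p|a|^p+\frac1q|b|^q$, i.e.\ $(1)\Rightarrow(2)$ with the contraction coming directly from the polar decomposition of $b$.
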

\begin{proof}
Clearly $1\Rightarrow 2$ with $z=\nu$ (the partial isometry in the polar decomposition of $b=\nu|b|$). If $2$ holds, picking a norm as in equation (\ref{normastr}), we have $2\Rightarrow 3$. By Lemma \ref{gammaks}, we have  $3\Rightarrow 4$ and finally, by Theorem \ref{igual} it follows that  $4\Rightarrow 1$.
\end{proof}

\subsubsection{Final remarks: equality of operators}

Assume that we have an equality of operators
\begin{equation}\label{igutrcomp}
z|ab^*|z^*=\frac{1}{p}|a|^p+\frac{1}{q}|b|^q
\end{equation}
for some contraction $z\in\bh$. Then from the previous theorem $|a|^p=|b|^q$ and
$$
z|b^*|^qz^*=z\nu|b|^q\nu^*z^*=z|ab^*|^*z^*=|b|^q.
$$ 
\begin{rem}
Let $Tr$ stand for the semi-finite trace of $\bh$. Assume for a moment that $Tr|b|^q<\infty$, or equivalently, that $\beta_k=\lambda_k(b)\in \ell_q$. Then
$$
Tr(|b^*|^q(1-z^*z))=Tr|b^*|^q-Tr(z|b^*|^q)=Tr|b|^q-Tr(z|b^*|^qz^*)=0,
$$
which is only possible if $|b^*|^q=|b^*|^qz^*z$, since $z$ is a contraction and the trace is faithful. Then also
$$
zz^*|b|^q=zz^*z|b^*|^qz^*=z|b^*|^qz^*=|b|^q,
$$
and 
$$
|b|^qz=z|b^*|^qz^*z=z|b^*|^q
$$
or equivalently $|b|z=z|b^*|$, which can be stated as $bz\nu=\nu zb$. The reader can check that these  three conditions 
$$
1) \; |b^*|z^*z=|b^*|,\qquad 2)\; |b|zz^*=|b|,\qquad 3)\; |b|z=z|b^*|
$$
are also sufficient to have equality in (\ref{igutrcomp}).
\end{rem}

This last fact, for $z$ a partial isometry (and with a different proof) was observed  \cite{af} by Argerami and Farenick.

\medskip

\textit{We conjecture that these three conditions are also necessary for (\ref{igutrcomp}) to happen with a contraction $z$ if $b$ is just compact}.

\subsection*{Acknowledgements}

I would like to thank Jorge Antezana for pointing me to the nice paper by Argerami and Farenick; I would also like to thank Esteban Andruchow for our valuable conversations on this subject, and Martin Argerami for all his help improving the manuscript.

\bigskip

\noindent
Gabriel Larotonda\\
Instituto de Ciencias \\
Universidad Nacional de General Sarmiento \\
J. M. Gutierrez 1150 \\
(B1613GSX) Los Polvorines \\
Buenos Aires, Argentina  \\
e-mail: glaroton@ungs.edu.ar


\begin{thebibliography}{XX}

\bibitem{ando} T. Ando. \textit{Matrix Young inequalities}. Operator theory in function spaces and Banach lattices, 33--38, Oper. Theory Adv. Appl., 75, Birkhäuser, Basel, 1995.

\bibitem{a} J. Arazy. \textit{More on convergence in unitary matrix spaces}. Proc. Amer. Math. Soc. 83 (1981), no. 1, 44--48. 

\bibitem{af} M. Argerami, D. R. Farenick. \textit{Young's inequality in trace-class operators}. Math. Ann. 325 (2003), no. 4, 727--744.

\bibitem{efz} J. Erlijman, D. R. Farenick, R. Zeng. \textit{Young's inequality in compact operators}. Linear operators and matrices, 171--184, Oper. Theory Adv. Appl., 130, Birkhäuser, Basel, 2002.

\bibitem{pedersen} F. Hansen, G. K. Pedersen, Gert Kjaergard. \textit{Jensen's inequality for operators and Löwner's theorem}. Math. Ann. 258 (1981/82), no. 3, 229--241.

\bibitem{hiai} F. Hiai. \textit{Equality cases in matrix norm inequalities of Golden-Thompson type}. Linear and Multilinear Algebra 36 (1994), no. 4, 239--249.

\bibitem{hk} O. Hirzallah, F. Kittaneh. \textit{Matrix Young inequalities for the Hilbert-Schmidt norm}. Linear Algebra Appl. 308 (2000), no. 1-3, 77--84.

\bibitem{s2} B. Simon. \textit{Convergence in trace ideals}. Proc. Amer. Math. Soc. 83 (1981), no. 1, 39--43.

\bibitem{simon} B. Simon. \textit{Trace ideals and their applications}. Second edition. Mathematical Surveys and Monographs, 120. American Mathematical Society, Providence, RI, 2005.

\end{thebibliography}
\end{document}